\newtheorem{Lemma}{Lemma} \newtheorem{Theorem}{Theorem}
\newtheorem{proposition}{Proposition}
 \newtheorem{corollary}{Corollary}
\theoremstyle{definition} 
\newtheorem{Remark}{Remark} 
\numberwithin{equation}{section} \newcommand{\Ss}{\mathbb{S}}
\newcommand{\R}{\mathbb{R}}
 \newcommand{\W}{\mathcal{W}}
\newcommand{\B}{\mathcal{B}}
\newcommand\eps\varepsilon \newcommand\V{\mathcal{V}}
\DeclareMathOperator{\sgn}{sgn}
 \DeclareMathOperator{\const}{const}
\begin{document}
\title[Operators with degenerate symbol] {Asymptotic behavior of
  eigenvalues of Schr\"odinger type operators with degenerate kinetic
  energy}

\author{Christian Hainzl} \address{Departments of Mathematics and
  Physics, UAB, 
  Birmingham AL 35294, USA} \email{hainzl@math.uab.edu}

\author{Robert Seiringer} \address{Department of Physics, Princeton
  University, Princeton NJ 08542-0708, USA}
\email{rseiring@princeton.edu}

\date{August 26, 2008.\\ \copyright\,2008 by the authors.  This paper may be
  reproduced, in its entirety, for non-commercial purposes.  }

\begin{abstract}
  We study the eigenvalues of Schr\"odinger type operators $T +
  \lambda V$ and their asymptotic behavior in the small coupling limit
  $\lambda \to 0$, in the case where the symbol of the kinetic energy,
  $T(p)$, strongly degenerates on a non-trivial manifold of
  codimension one.
\end{abstract}

\maketitle

\section{Introduction}

In several recent papers attention has been drawn  to
Schr\"odinger type operators on $L^2(\R^n)$ of the form
\begin{equation}\label{hl}
  H_\lambda = T(i\nabla) + \lambda V(x)\,, 
\end{equation}
where the non-negative symbol $T(p)$ degenerates on a manifold $S$
of codimension one, $V(x)$ is a real-valued potential, and $\lambda
> 0$ denoting the coupling parameter. The degeneracy of $T$ causes a
high instability of the lower edge of the spectrum of $H_\lambda$ and
gives rise to spectral properties which are comparable to the case of
Schr\"odinger operators in one dimension.  Operators of the type
(\ref{hl}) have appeared in the study of the roton spectrum of liquid
helium II \cite{KC}, matrix Hamiltonians in spintronics
\cite{BGP,BR,CM}, as well in the elasticity theory \cite{F,FW}.

Typically, we think of $T(p)$ as originating from a smooth symbol,
$P(p)$, which vanishes on $S$ and has no critical points in the
neighborhood of $S$, with
\begin{equation}\label{defr}
  T(p) = |P(p)|^r 
\end{equation}
for some parameter $1 \leq r < \infty$.  As pointed out by Laptev,
Safronov and Weidl in \cite{LSW}, due to the singularity of the
resolvent of $T$ on $S$ the spectrum of $T + \lambda V$ is mainly
determined by the behavior of the potential $V$ close to $S$.  More
precisely, an important role is played by an operator acting on
functions on $S$, i.e., $\V_S: L^2(S) \to L^2(S)$, given by
\begin{equation}\label{VS}
  (\V_S u)(p) = \frac 1{\sqrt{|\nabla
      P(p)|}(2\pi)^{n/2}}\int_S \hat V(p-q) \frac{u(q)}{\sqrt{|\nabla
      P(q)|}} dq\,,
\end{equation}
with $dq$ being the Lebesgue measure on $S$ and $\hat V(p) =
(2\pi)^{-n/2}\int_{\R^n} e^{-ix\cdot p} V(x) dx$ denoting the Fourier
transform of $V(x)$. In particular, it was shown in \cite{LSW} that
$T+ \lambda V$ has infinitely many negative eigenvalues if $V$ is
negative.

Operators of the type (\ref{VS}) appeared already earlier in \cite{BY}
in the study of scattering phases. They play a crucial role in the
study of the non-linear Bardeen-Cooper-Schrieffer (BCS) gap equation
of superfluidity \cite{BCS,Leg}. In fact, it was shown in
\cite{FHNS,HS,HS2} that the lowest eigenvalue of $\V_S$ is related to
the critical temperature for the existence of solutions of the BCS gap
equation. In this case, $T(p)$ is roughly of the form $|p^2-\mu|$ for
$\mu>0$, $p\in \R^3$, and hence $S$ is the two dimensional sphere of
radius $\sqrt\mu$.

The goal of the present paper is to generalize the results and
techniques of \cite{FHNS,HS} to a large class of manifolds $S$ and
kinetic symbols $T(p)$. We shall show that corresponding to any
negative eigenvalue, $a^i_S$, of the compact operator $\V_S$ there
exists a negative eigenvalue, $- e_i(\lambda)$, of $T + \lambda
V$. Moreover, in Theorem~\ref{Thm1gen} we study the asymptotic
behavior of $e_i(\lambda)$ as $\lambda\to 0$ and show that
\begin{equation}\label{asymptbeh}
  \lim_{\lambda \to 0} \lambda f(e_i(\lambda)) =
  -1/a^i_S\,, 
\end{equation}
where the function $f$ depends on the value of $r$ in (\ref{defr}) as
\begin{equation}\label{fe}
  f(e) = \left\{ \begin{aligned}
      & \frac{2\pi}{r \sin(\pi/r)} \frac 1{ e^{(r-1)/r} }    
\, \, &{\rm if}& \,\,\, r > 1\\
      & 2 \ln [1+1/e]   \,\, &{\rm if}& \,\,\, r=1.
    \end{aligned}
  \right.
\end{equation}
We shall also relate the eigenvector $\psi_\lambda^i$ of $H_\lambda$
corresponding to the eigenvalue $-e_i(\lambda)$ to the eigenvector
$u_i$ of $\V_S$ with eigenvalue $a_S^i$.  We shall find that after
appropriate normalization $\psi_\lambda^i$ converges to 
\begin{equation}
  \int_{S}
  e^{ix\cdot p} \frac{u_i(p)}{\sqrt{(2\pi)^{n}|\nabla P(p)|}} dp
\end{equation}
in the limit $\lambda \to 0$ in a suitable sense.

If $1\leq r < 2$ our methods enable us to find the next to leading
order term of $\lambda f(e_i(\lambda))$ as $\lambda\to 0$. This is
the content of Theorem \ref{thm2statement}.

\section{Main results}

We consider operators on $L^2(\R^n)$, $n\geq 2$,  of the form
\begin{equation}
  H_\lambda = T(i\nabla) + \lambda V(x)\,.
\end{equation}
The symbol of the kinetic operator, $T(p)$, attains its minimum on a
manifold of codimension one. For convenience let us assume that the
minimum value is zero, and let
\begin{equation}\label{defs}
  S = \left\{p\in \R^n\,|\, T(p) =0\right\}\,.
\end{equation}
It is not being assumed that $S$ is connected, but it should consist of only finitely many connected components. We shall further assume that there exists a $\sigma>0$ and a compact
neighborhood $\Omega\subset \R^n$ of $S$ containing $S$, with the
property that the distance of any point in $S$ to the complement of
$\Omega$ is at least $\sigma$. Moreover, we assume that
\begin{itemize}
\item[(i)] $T(p) = |P(p)|^r$ for some locally bounded, measurable
  function $P$, with $1\leq r < \infty$, and $P\in C^2(\Omega)$,
\item[(ii)] $|\nabla P|$ does not vanish in $\Omega$,
\item[(iii)] for some constants $C_1>0$, $C_2>0$ and $s>0$, $T \geq
  C_1 |p|^{s} + C_2$ for $p\not\in \Omega$.
\end{itemize}
These assumptions appear naturally in all recent applications
mentioned in the introduction. They could be relaxed in various ways,
but we shall not try to do so in order to avoid unilluminating
complications in the proofs.

Since $S$ in (\ref{defs}) is the zero set of the function $P\in
C^2(\Omega)$, and $\nabla P \neq 0$ in $\Omega$ by assumption, we
conclude that $S$ is a nice submanifold of codimension one. In
particular, if $V\in L^1(\R^n)$ then $\hat V(p)$ is a bounded,
continuous function and hence (\ref{VS}) defines a compact (in fact,
trace-class) operator $V_S$ on $L^2(S)$.

In the following, it will be useful to introduce the operator $F_S:
L^1(\R^n) \to L^2(S)$, which is obtained by restricting the Fourier
transform to $S$ and multiplying by $|\nabla P|^{-1/2}$, i.e.,
\begin{equation}
  \left(F_S\varphi\right)(p) = \frac {1}{\sqrt{(2\pi)^n}\sqrt{|\nabla P
      (p)|}} \int_{\R^n} e^{-ix\cdot p} \varphi(x) dx\Big|_{p\in S}
  \,.  
\end{equation}
Its adjoint, $F_S^* : L^2(S) \to L^\infty(\R^n)$, is given by
\begin{equation}
  \left (F^*_S u\right)(x) = \frac {1}{\sqrt{(2\pi)^n}} \int_{S}
  \frac{e^{ix\cdot p} }{\sqrt{|\nabla P(p)|}}u(p) dp\, .  
\end{equation}
Then $\V_S$ is (\ref{VS}) equals $F_S V F_S^*$. Note that $V^{1/2}
F_S^*$ is a bounded operator if $V\in L^1(\R^n)$.

For $i=1,2,\dots$, let $a^i_S < 0$ be the negative eigenvalues,
counting multiplicity, of $\V_S$, and let $u_i$ be its eigenvectors,
i.e.,
\begin{equation}
  \V_S u_i = a^i_S u_i\,, \qquad u_i \in L^2(S)\,.
\end{equation}
The following theorem shows that it is possible to associate to any
such $a^i_S$ a negative eigenvalue $- e^i(\lambda)$ for
$H_\lambda$. Moreover, we will recover the asymptotic behavior of
$e_i(\lambda)$ in the limit $\lambda \to 0$. A similar statement can
be made about the corresponding eigenvectors. The theorem is a
generalization of \cite[Theorem~1]{FHNS}.

\begin{Theorem} \label{Thm1gen} Let $T(p)$ satisfy the assumptions
  above, and let $V\in L^1(\R^n) \cap L^{n/s}(\R^n)$ if $n>s$, $V\in
  L^1(\R^n) \cap L^{1+\eps}(\R^n)$ for some $\eps>0$ if $n=s$, and
  $V\in L^1(\R^n)$ if $n<s$. Additionally we assume that $\iint |V(x)
  ||x - y|^\kappa |V(y)|dxdy < \infty$, with $\kappa = 2$ if $T$ is
  not a radial function, and $\kappa=1$ if $T$ is radial and
  $n=2$. Then
  \begin{itemize}
  \item[(i)] for every negative eigenvalue $a_S^i<0$ of $\V_S$,
    counting multiplicity, and every $\lambda>0$, there is a negative
    eigenvalue $-e_i(\lambda)<0$ of $H_\lambda = T + \lambda V$ such
    that
    \begin{equation}\label{limf2.2}
      \lim_{\lambda \to 0} \lambda f(e_i(\lambda)) = -1/a_S^i \,.
    \end{equation}
    The function $f$ is defined in (\ref{fe}).
  \item[(ii)] for every eigenvector $\psi^i_\lambda \in L^2(\R^n)$ of
    $H_\lambda$, corresponding to the eigenvalue $-e_i(\lambda)$, there is an
    eigenvector $u_i \in L^2(S) $ of $\V_S$ corresponding to $a_S^i$
    such that after appropriate normalization
    \begin{equation}\label{conveigvhlambdatoeigenvvs}
      V^{1/2} \psi^i_\lambda \to V^{1/2} F_S^* u_i \quad 
     \text{as $\lambda \to 0$, strongly in $L^2(\R^n)$}\,.
    \end{equation}
  \item[(iii)] if $r < 2$ all other possible eigenvalues
    $-e_j(\lambda)$ of $H_\lambda$ satisfy $ f(e_j(\lambda)) \geq c
    \lambda^{-2}$ for some constant $c>0$.
  \item[(iv)] if $r < 2$ and $\V_S \geq 0$, and there exists an
    $\delta$ such that also $ F_S(V - \delta |V|)F^*_S \geq 0$
    then $H_\lambda \geq 0$ for $\lambda$ small enough.
  \end{itemize}
\end{Theorem}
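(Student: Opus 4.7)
The plan is to apply the Birman--Schwinger principle. Writing $V = U|V|$ with $U := \sgn V$, and setting $V^{1/2} := U|V|^{1/2}$, the equation $H_\lambda\psi = -e\psi$ with $e>0$ is equivalent to $\varphi = -\lambda B_e \varphi$, where $\varphi := |V|^{1/2}\psi$ and
\[
B_e := |V|^{1/2}(T+e)^{-1} V^{1/2}.
\]
The integrability hypothesis on $V$, combined with the growth assumption (iii) on $T$ at infinity, makes $B_e$ a compact operator on $L^2(\R^n)$ for each $e>0$, via a standard Sobolev-type argument in the three regimes $n>s$, $n=s$, $n<s$. Hence the negative spectrum of $H_\lambda$ corresponds bijectively (with multiplicity) to the values $e>0$ for which $-1/\lambda \in \sigma(B_e)$.

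The heart of the argument is the asymptotic expansion
\[
B_e = f(e)\, |V|^{1/2} F_S^* F_S V^{1/2} + R_e, \qquad \|R_e\|_{\mathrm{op}} = o(f(e)),
\]
as $e \to 0^+$. To establish it I pass to momentum space, insert a smooth cutoff $\chi$ supported in $\Omega$, and use tubular coordinates $p = p_0 + t\,\hat n(p_0)$ with $p_0 \in S$, $|t|<\sigma$. Outside $\Omega$ the resolvent is uniformly bounded by assumption (iii). Inside, the rescaling $t = (e/|\nabla P(p_0)|^r)^{1/r} s$ together with the identities
\[
\int_\R \frac{ds}{1+|s|^r} = \frac{2\pi}{r\sin(\pi/r)} \ (r>1), \qquad \int_{-L}^{L} \frac{ds}{e+|s|} = 2\ln(1+L/e) \ (r=1)
\]
produce exactly the prefactor $f(e)$ times the operator $F_S^* F_S$. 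The dominant error comes from replacing the oscillating phase $e^{it\hat n\cdot x}$ by $1$ in the tubular integral; its trace-norm contribution is controlled by the weighted bilinear form $\iint |V(x)|\,|x-y|^\kappa |V(y)|\,dx\,dy$ times a factor $o(f(e))$. This is precisely the role of the moment hypothesis, with $\kappa=2$ in general and $\kappa=1$ in the radial case where only the one-dimensional normal direction must be controlled.

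Granting the expansion, parts (i) and (ii) follow by Riesz-projector perturbation theory for isolated eigenvalues of a compact operator. The similarity $\sigma(F_S^* F_S V^{1/2}\cdot |V|^{1/2})\setminus\{0\} = \sigma(F_S V F_S^*)\setminus\{0\} = \sigma(\V_S)\setminus\{0\}$ shows that each negative $a_S^i$ persists as an isolated eigenvalue $a_S^i + o(1)$ of $B_e/f(e)$; inverting $f(e_i(\lambda))[a_S^i + o(1)] = -1/\lambda$ gives $\lambda f(e_i(\lambda)) \to -1/a_S^i$, proving (i). Norm convergence of the spectral projector yields $\varphi^i_\lambda \to |V|^{1/2} F_S^* u_i$, and passing back via $V^{1/2}\psi^i_\lambda = U\varphi^i_\lambda$ gives (ii). For (iii), when $r<2$ the remainder $R_e$ remains uniformly bounded as $e \to 0$; hence any other eigenvalue $-e_j(\lambda)$ whose $\lambda f(e_j(\lambda))$ stays bounded away from $\{-1/a_S^i\}$ forces a spectral-denominator estimate $\|\lambda R_{e_j}\|\,\|(I+\lambda f(e_j)K_0)^{-1}\| \gtrsim 1$, which unwinds to $\lambda^2 f(e_j(\lambda)) \geq c$. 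For (iv), the hypothesis $\V_S\geq 0$ renders the leading operator $K_0 := |V|^{1/2} F_S^* F_S V^{1/2}$ spectrally non-negative, while $F_S V F_S^* \geq \delta F_S |V| F_S^*$ dominates the negative part of $R_e$ by a small fraction of $f(e)K_0$; for $\lambda$ small, no eigenvalue of $B_e$ reaches $-1/\lambda$, so $H_\lambda \geq 0$.

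The principal obstacle is the uniform quantitative control of $R_e$ in operator (or trace) norm: the tubular Jacobian, the logarithmic singularity at $r=1$, the mixed $L^p$ regime separating $n>s$, $n=s$, $n<s$, and the regularity of $\hat V$-type objects near $S$ all enter simultaneously, and it is exactly the $\kappa$-moment condition on $V$ that absorbs the phase-replacement error into $o(f(e))$.
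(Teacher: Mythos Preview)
Your proposal is correct and follows essentially the same strategy as the paper: Birman--Schwinger, then split the Birman--Schwinger kernel into a singular part $f(e)\,|V|^{1/2}F_S^*F_S V^{1/2}$ plus a remainder $R_e$ (the paper calls it $M_e$) of size $o(f(e))$, and control $R_e$ via the co-area/tubular decomposition near $S$ using the $\kappa$-moment hypothesis. The one structural difference worth noting is that the paper, instead of applying Riesz-projector perturbation to $B_e/f(e)$ on $L^2(\R^n)$, factors
\[
1+\lambda B_e \;=\; (1+\lambda M_e)\Bigl(1+\lambda f(e)\,(1+\lambda M_e)^{-1}V^{1/2}F_S^*F_S|V|^{1/2}\Bigr)
\]
and thereby reduces the problem to a \emph{self-adjoint} operator $\lambda f(e)\,F_S|V|^{1/2}(1+\lambda M_e)^{-1}V^{1/2}F_S^*$ on $L^2(S)$. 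This buys a cleaner treatment of (iii) and especially (iv): on $L^2(S)$ one can use the operator inequality $G(\lambda,e)\ge -\const\,\lambda\,|V|$ together with $F_S(V-\delta|V|)F_S^*\ge 0$ directly, whereas on $L^2(\R^n)$ your $K_0$ is not self-adjoint and the spectral comparison is less transparent. For the remainder bound, the paper compares $\int_{S_t}$ with $\int_S$ via the divergence theorem applied to $|\hat\varphi|^2/|\nabla P|$, which also absorbs the variation of $|\nabla P|$ and of the surface measure along the normal---not only the phase replacement you single out; your tubular-coordinate scheme would have to track these contributions as well.
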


Equation (\ref{limf2.2}) implies, in particular, that
\begin{equation}
  e_i(\lambda) = \left \{
    \begin{aligned}
      & \left(\frac{2\pi}{r \sin(\pi/r)} \lambda |a^i_S|\right)^{r/(r-1)}
\left(1 +  o(1)\right) 
\, \, &{\rm if}& \,\,\, r > 1\\
      &\exp\left(-\frac 1{2\lambda |a^i_S|}\left(1 +
          o(1)\right)\right) \,\, &{\rm if}& \,\,\, r=1
    \end{aligned}
  \right. 
\end{equation}
as $\lambda\to 0$.  On the other hand (iii) guarantees that all
possible eigenvalues of $H_\lambda$ not corresponding to a negative
eigenvalue of $\V_S$ satisfy
\begin{equation}
  e_j(\lambda) \leq  \left \{
    \begin{aligned}
      &\const \lambda^{2r/(r-1)} \, \, &{\rm if}& \,\,\, 1 <r< 2\\
      & \exp\left(-\const \lambda^{-2}\right) \,\, &{\rm if}& \,\,\,
      r=1 \,.
    \end{aligned}
  \right.
\end{equation}
The following immediate corollary of Theorem~\ref{Thm1gen} generalizes results in \cite{LSW,P}.

\begin{corollary}\label{Cor}
  Let the assumptions be as in Theorem \ref{Thm1gen}.
  \begin{itemize}
  \item[(i)] Then, for all $\lambda > 0$, the operator $H_\lambda$ has
    at least as many negative eigenvalues as $\V_S$ does.
  \item[(ii)] If $V(x) \leq 0$ and does not vanish a.e., then $\V_S$
    (and consequently $H_\lambda$) has infinitely many negative
    eigenvalues.
  \end{itemize}
\end{corollary}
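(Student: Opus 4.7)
Part (i) is a direct consequence of Theorem~\ref{Thm1gen}(i). Enumerating the negative eigenvalues $a_S^i$ of $\V_S$ with multiplicity yields, for each fixed $\lambda>0$, negative eigenvalues $-e_i(\lambda)$ of $H_\lambda$ counted with the same multiplicity; that the correspondence is multiplicity-preserving is ensured by Theorem~\ref{Thm1gen}(ii), since linearly independent eigenvectors $u_i$ chosen within the same eigenspace of $\V_S$ produce linearly independent approximate eigenvectors $\psi_\lambda^i \to F_S^* u_i$ of $H_\lambda$ for small $\lambda$.

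For part (ii), set $W := -V \geq 0$, so that $\V_S = -F_S W F_S^* = -AA^*$ with $A = F_S W^{1/2}$. This is a non-positive compact operator on $L^2(S)$, and it admits infinitely many negative eigenvalues if and only if it is not of finite rank. Assume for contradiction that $\V_S$ has finite rank; then $N := \ker \V_S$ has finite codimension in $L^2(S)$ and is in particular infinite-dimensional. For every $u \in N$,
\begin{equation*}
0 = -\langle u, \V_S u\rangle = \int_{\R^n} W(x)\, |F_S^* u(x)|^2\, dx,
\end{equation*}
which forces $F_S^* u = 0$ on $\{W > 0\}$, a set of positive Lebesgue measure since $V$ does not vanish a.e.

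The decisive step is the observation that $F_S^* u$ is real-analytic (in fact entire) on $\R^n$: since $S$ is compact (by assumption (iii) together with $T=0$ on $S$) and $|\nabla P|$ is bounded away from zero on $S$, $F_S^* u$ is the Fourier transform of a finite compactly supported measure on $\R^n$, so Paley--Wiener applies. A real-analytic function on $\R^n$ that vanishes on a set of positive Lebesgue measure must vanish identically, hence $F_S^* u \equiv 0$. Since $F_S^*$ is injective (by Fourier uniqueness applied to the measure $u(p)|\nabla P(p)|^{-1/2}\, d\sigma_S(p)$ on $S$), we conclude $u = 0$, contradicting $\dim N = \infty$. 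The infinitude of negative eigenvalues of $H_\lambda$ then follows from part (i). The main non-routine ingredient is this real-analyticity/uniqueness argument; the rest is bookkeeping that rests on the standing hypotheses of Section~2.
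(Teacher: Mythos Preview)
Your argument for part~(ii) is correct and is essentially the paper's: both show that $\V_S$ has trivial kernel when $V\le 0$ by observing that $F_S^*u$ is real-analytic and hence cannot vanish on a set of positive measure unless $u=0$. The paper phrases this as ``analytic in each component $x_i$'' rather than invoking Paley--Wiener, but the content is identical.

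For part~(i), however, there is a step you skip and the paper does not. Although the \emph{statement} of Theorem~\ref{Thm1gen}(i) asserts the existence of $-e_i(\lambda)<0$ for every $\lambda>0$, its proof (which relies on the invertibility of $1+\lambda M_e$ and a perturbation argument) in fact only constructs these eigenvalues for $\lambda$ sufficiently small. The paper's proof of the corollary closes this gap with a monotonicity argument: the negative eigenvalues of $H_\lambda/\lambda = T/\lambda + V$ are monotone decreasing in $\lambda$ since $T\ge 0$, so once $-e_i(\bar\lambda)<0$ at some small $\bar\lambda$, the eigenvalue persists for all $\lambda\ge\bar\lambda$. By citing only the theorem statement you miss this. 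Separately, your multiplicity justification via Theorem~\ref{Thm1gen}(ii) is unnecessary (the phrase ``counting multiplicity'' in Theorem~\ref{Thm1gen}(i) already handles it) and slightly misquoted: the convergence there is of $V^{1/2}\psi_\lambda^i$ to $V^{1/2}F_S^*u_i$ in $L^2$, not of $\psi_\lambda^i$ to $F_S^*u_i$, and indeed $F_S^*u_i$ is generally not in $L^2(\R^n)$.
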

\begin{proof}
  The negative eigenvalues, $-e_i(\lambda)/\lambda$, of the operator
  $H_\lambda/\lambda = T/\lambda + V$ are monotonically decreasing in
  $\lambda$ since $T\geq 0$. Consequently if $-e_i(\bar \lambda) < 0$
  then $-e_i(\lambda)$ is necessarily negative for all $\lambda \geq
  \bar \lambda$. Thus $(i)$ follows immediately from Theorem
  \ref{Thm1gen} (i).

  If $V \leq 0$, then $\V_S\leq 0$ and all eigenvalues of $\V_S$ are
  necessarily non-positive. We shall argue that $0$ cannot be an
  eigenvalue of $\V_S$ since for any non-zero function $\varphi\in
  L^2(S)$, $F_S^* \varphi$ can vanish at most on a subset of $\R^n$ of
  codimension one. This follows from the fact that
  $(F_S^*\varphi)(x_1,\dots,x_n)$ is analytic in each component $x_i$,
  and therefore can only have isolated zeros in each component.
  Consequently $(\varphi, \V_S \varphi) = \int_{\R^n} |(F_S^* u)(x)|^2
  V(x)dx < 0$ for any $u$. This implies (ii).
\end{proof}

\begin{Remark}
 In the BCS gap equation of superfluidity at zero temperature
  \cite{HHSS,FHNS,HS,HS2} the kinetic energy operator $T(p) = |p^2 -
  \mu|$ appears, with $\mu>0$ being the chemical potential. In this
  case $r=1$ and hence $f(e)= 2\ln(1/e)$. Therefore, the eigenvalues
  of $T + \lambda V$ are exponentially small and satisfy $e_i(\lambda)
  \sim e^{-\frac 1{2 \lambda |a^i_S|}}$.
\end{Remark}

\begin{Remark} In the study of the roton spectrum in liquid helium \cite{Landau} a
  kinetic energy of the type $T(p) = \frac{(|p| - p_0)^2}{2\mu} +
  \Delta$ arises, with $p_0,\mu, \Delta > 0$. In this case Theorem
  \ref{Thm1gen} implies that the eigenvalues depend quadratically on
  $\lambda$ for small $\lambda$, i.e., $e_i(\lambda) - \Delta \sim
  (\lambda |a_S^i|)^2$, similar to the case of Schr\"odinger operators
  in one dimension \cite{simon}.
\end{Remark}

\begin{Remark} The convergence property \eqref{conveigvhlambdatoeigenvvs} can be
  particularly useful in the case where the manifold $S$ is a sphere
  and the potential $V$ is radial, since the eigenfunctions of $\V_S$
  are known explicitly. In the case $n=3$, for instance, they are the
  spherical harmonics. If additionally $\hat V \leq 0$ then the
  constant function on $S$ is the ground state of $\V_S$. This
  property was important in \cite{HS} where a precise characterization
  of the asymptotic behavior of the solution of the BCS gap
    equation of superfluidity was given.
\end{Remark}

\begin{Remark}
  In the case of trapped modes for an elastic plate in \cite{F} a
  small coupling asymptotics was derived in the case where $S$ is a
  circle in $\R^2$.
\end{Remark}

In the following let $r < 2$. In this case, we shall now state a more
precise characterization of the asymptotic behavior of the eigenvalues
of $H_\lambda$ as $\lambda\to 0$. More precisely, we will recover the
next order in $\lambda$.

It will be shown in Lemma~\ref{lem2} that the quadratic form
\begin{equation}\label{defws}
  \left(u, \W_S u\right) = \lim_{e \to 0} \left( u, F_S V \left(\frac 1{T + e}  
- f(e) F^*_S F_S\right)  V F^*_S u\right) 
\end{equation}
defines a bounded operator on $L^2(S)$.  For $\lambda > 0$ let further
\begin{equation}\label{defB}
  \B_S = \V_S - \lambda \W_S
\end{equation}
and let $b_S^i(\lambda) < 0$ denote the negative eigenvalues of
$\B_S$. The following theorem is a generalization of
\cite[Theorem~1]{HS}.
 
\begin{Theorem}\label{thm2statement} Let $T$ and $V$ be as in
  Theorem~\ref{Thm1gen} and assume that $r < 2$. Then
  \begin{itemize}
  \item[(i)] If $\lim_{\lambda \to 0} b_S^i(\lambda) < 0$ then
    $H_\lambda= T+\lambda V$ has, for small $\lambda$, a corresponding
    negative eigenvalue $-e_i(\lambda) < 0$, with
    \begin{equation}
      \label{secondorderofei} \lim_{\lambda \to 0} \left [f(e_i(\lambda))
        + \frac 1{\lambda b_S^i(\lambda)} \right] = 0.
    \end{equation}
  \item[(ii)] If the kernel of $\V_S$ is not empty then there exists
    at least one corresponding negative eigenvalue of $H_\lambda$.
  \end{itemize}
\end{Theorem}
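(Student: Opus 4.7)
My plan is to refine the Birman--Schwinger analysis of Theorem~\ref{Thm1gen} by an additional order in the expansion as $e\to 0$. The starting point is the same Birman--Schwinger reformulation: $-e<0$ is an eigenvalue of $H_\lambda$ exactly when $-1/\lambda$ is an eigenvalue of
\begin{equation*}
  A_e \;:=\; |V|^{1/2}\,(T+e)^{-1}\,|V|^{1/2}\,\sgn(V),
\end{equation*}
with the same symmetrization conventions for signed $V$ as in the proof of Theorem~\ref{Thm1gen}. I split $A_e = f(e)\,P + Q_e$, where $P:=|V|^{1/2}F_S^*F_S|V|^{1/2}\sgn(V)$ and $Q_e:=|V|^{1/2}[(T+e)^{-1}-f(e)F_S^*F_S]|V|^{1/2}\sgn(V)$. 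By the identity $\sigma(AB)\setminus\{0\}=\sigma(BA)\setminus\{0\}$, the non-zero spectrum of $P$ coincides with that of $\V_S=F_SVF_S^*$, and each eigenvector $u_i$ of $\V_S$ with eigenvalue $a_S^i$ lifts to a suitably normalized eigenvector $\hat\psi_i$ of $P$ proportional to $|V|^{1/2}F_S^*u_i$. By Lemma~\ref{lem2}, $Q_e$ converges to a bounded limit $Q_0$ in a sufficiently strong sense, and a direct computation from (\ref{defws}), using $F_SVF_S^*F_SVF_S^* = \V_S^2$, gives $\langle\hat\psi_i,Q_0\hat\psi_i\rangle = -\langle u_i,\W_S u_i\rangle/a_S^i$.

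Since $f(e)\to\infty$, large-parameter perturbation theory for $A_e$ yields an eigenvalue branch
\begin{equation*}
  \mu_i(e)\;=\;f(e)\,a_S^i\;-\;\frac{\langle u_i,\W_S u_i\rangle}{a_S^i}\;+\;o(1) \qquad(e\to 0),
\end{equation*}
and imposing the Birman--Schwinger condition $\mu_i(e_i(\lambda))=-1/\lambda$ and solving for $f(e_i(\lambda))$ produces
\begin{equation*}
  f(e_i(\lambda))\;=\;-\frac{1}{\lambda a_S^i}\;-\;\frac{\langle u_i,\W_S u_i\rangle}{(a_S^i)^2}\;+\;o(1).
\end{equation*}
On the other hand, ordinary first-order Rayleigh--Schr\"odinger perturbation theory for $\B_S=\V_S-\lambda\W_S$ gives $b_S^i(\lambda)=a_S^i-\lambda\langle u_i,\W_S u_i\rangle+O(\lambda^2)$, whence $1/(\lambda b_S^i(\lambda))=1/(\lambda a_S^i)+\langle u_i,\W_S u_i\rangle/(a_S^i)^2+O(\lambda)$. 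Adding these two displays yields (\ref{secondorderofei}) and hence part~(i); the hypothesis $\lim_{\lambda\to 0}b_S^i(\lambda)<0$ is equivalent to $a_S^i<0$, which is exactly what allows Theorem~\ref{Thm1gen} to supply the branch $e_i(\lambda)\to 0$ in the first place.

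For part~(ii), pick $u_0\in\ker\V_S\setminus\{0\}$. Since $\V_S u_0=0$, the subtracted term in (\ref{defws}) contributes $f(e)\langle u_0,\V_S^2 u_0\rangle = 0$, leaving
\begin{equation*}
  \langle u_0,\W_S u_0\rangle \;=\; \lim_{e\to 0}\bigl\langle VF_S^*u_0,\,(T+e)^{-1}\,VF_S^*u_0\bigr\rangle \;\geq\;0,
\end{equation*}
with strict inequality because $F_S^*u_0$ is non-vanishing almost everywhere (by the analyticity argument used in Corollary~\ref{Cor}) and $V$ is not identically zero. Consequently $\B_S$ has a negative eigenvalue of order $-\lambda\langle u_0,\W_S u_0\rangle$ for small $\lambda$, to which part~(i) applies. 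The main obstacle in this plan is the uniform control of the $o(1)$ remainder in the perturbative step, particularly when $r=1$ where $f(e)$ diverges only logarithmically as $e\to 0$: one needs to combine the convergence of $Q_e$ from Lemma~\ref{lem2} with a quantitative spectral-gap estimate for $f(e)P$ so that the error in the eigenvalue expansion of $A_e$ remains small compared to the $O(1)$ correction term being tracked.
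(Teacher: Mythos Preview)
Your route is genuinely different from the paper's. The paper does \emph{not} do perturbation theory on the Birman--Schwinger operator $A_e$ on $L^2(\R^n)$; instead it invokes Proposition~\ref{prop1} to reduce to the \emph{self-adjoint} operator \eqref{volleoperatorvonSnachS} on $L^2(S)$, expands $(1+\lambda M_e)^{-1}=1-\lambda M_0 + \lambda W(\lambda,e)$ with $\|W(\lambda,e)\|\to 0$, and rewrites \eqref{volleoperatorvonSnachS} directly as $\lambda f(e)\bigl(\B_S + \lambda\,o(1)\bigr)$. One then perturbs $\B_S$ itself, so the eigenvalue seen is $b_S^i(\lambda)+o(\lambda)$ with no further expansion needed; \eqref{secondorderofei} drops out immediately and degeneracy of $a_S^i$ is handled automatically. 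Your approach instead expands both $\mu_i(e)$ and $b_S^i(\lambda)$ to first order and matches the expansions. That can be made to work, but three genuine gaps remain.

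First, $A_e$ is not self-adjoint when $V$ changes sign, so ``large-parameter perturbation theory'' requires the left/right eigenvector formalism; the diagonal quantity $\langle\hat\psi_i,Q_0\hat\psi_i\rangle$ you wrote is not the correct first-order correction. With left eigenvector $\sgn(V)|V|^{1/2}F_S^*u_i$ and right eigenvector $|V|^{1/2}F_S^*u_i$ one gets correction $+\langle u_i,\W_S u_i\rangle/a_S^i$ (the biorthogonal pairing equals $a_S^i$), not the minus sign you stated; this also repairs the internal sign inconsistency between your display for $\mu_i(e)$ and the one for $f(e_i(\lambda))$. Second, your matching argument, as written, only treats simple $a_S^i$: in the degenerate case you must show that the $k\times k$ splitting matrix for $A_e$ coincides (up to the factor $1/a_S^i$) with the $\W_S$-matrix governing the splitting of $b_S^i(\lambda)$. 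The paper's route avoids this entirely by never expanding $b_S^i(\lambda)$.

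Third, and most seriously, your proof of (ii) does not close. You produce a negative eigenvalue of $\B_S$ of size $-c\lambda$ with $c=\langle u_0,\W_S u_0\rangle>0$, and then say ``to which part~(i) applies''. But part~(i) assumes $\lim_{\lambda\to0} b_S^i(\lambda)<0$, which fails here since $b_S^i(\lambda)\to 0$. One must argue directly (as the paper does via Proposition~\ref{prop1}) that the operator $\lambda f(e)\bigl(\B_S + \lambda\,o(1)\bigr)$ still attains the eigenvalue $-1$: the perturbed eigenvalue is $-c\lambda + o(\lambda)$, and since $f$ is onto $(0,\infty)$ one can choose $e=e(\lambda)$ with $f(e)\sim (c\lambda^2)^{-1}$, whence $\lambda f(e)\cdot(-c\lambda+o(\lambda))=-1$.
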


\begin{Remark}
  If $a^i_S < 0$ is a non-degenerate eigenvalue of $\V_S$ and $u_i$ is
  the corresponding eigenvector, then first order perturbation theory
  implies that the corresponding eigenvalue of $\B_S$ satisfies
  \begin{equation}
    b_S^i(\lambda) =  a_S^i - \lambda  (u_i, \W_S u_i) + o(\lambda)\,.
  \end{equation}
  Hence (\ref{secondorderofei}) can be rewritten in the form
$$
\lim_{\lambda \to 0} \left [f(e_i(\lambda)) + \frac 1{\lambda a^i_S} +
  \frac{(u_i, \W_S u_i)}{(a_S^i)^2}\right] = 0.
$$
A similar expression holds in case $a_i<0$ is $k$-fold degenerate,
with $(u_i,\W_Su_i)$ replaced by the eigenvalues of the $k\times k$
matrix $(u_i^{(j)},\W_S u_i^{(l)})$, where $u_i^{(j)}$ denotes the
eigenvectors of $\V_S$ corresponding to the eigenvalue $a^i_S$.
\end{Remark}

\section{Proofs}

According to the Birman-Schwinger principle, the operator $H_\lambda$
has a negative eigenvalue $-e<0$ if and only if the compact operator
\begin{equation}\label{BS-operator}
  \lambda V^{1/2}\frac 1{T+ e} |V|^{1/2}
\end{equation}
has an eigenvalue $-1$. Here, we use the usual convention $V^{1/2} =
\sgn(V)|V|^{1/2}$. Note that $V^{1/2}(T+ e)^{-1} |V|^{1/2} $ is
actually a Hilbert-Schmidt operator for $e>0$. This follows from the
Hardy-Littlewood-Sobolev inequality \cite[Theorem~4.3]{LL} and our
assumptions on $T$ and $V$.

More precisely, if
\begin{equation} \label{eigenvalue-equ-e} H_\lambda \psi_\lambda = - e
  \psi_\lambda
\end{equation}
for $\psi_\lambda \in L^2(\R^n)$ and $e>0$, then
\begin{equation}\label{thm1birmschwinger}
  \lambda V^{1/2}\frac 1{T+ e} |V|^{1/2} \phi_\lambda = -\phi_\lambda\,,
\end{equation}
where $\phi_\lambda = V^{1/2} \psi_\lambda$. It is, in fact, not
difficult to see $\phi_\lambda\in L^2$, since $|V|$ is infinitesimally
form-bounded with respect to $T$ under our assumptions on $T$ and
$V$. On the other hand \eqref{thm1birmschwinger} implies
\eqref{eigenvalue-equ-e} by choosing
\begin{equation}\label{eigenvbirmanschwinger}
  \psi_\lambda = \frac 1{T+e} |V|^{1/2} \phi_\lambda
\end{equation}
which is in $L^2(\R^3)$ since $T\geq 0$, $e>0$ and the operator
$|V|^{1/2}(T+e)^{-1}|V|^{1/2}$ is bounded.

Our results will rely on the fact that the singular part of the
\eqref{BS-operator} as $e\to 0$ is governed by the operator $V^{1/2}
F_S^* F_S |V|^{1/2}$, which is isospectral to $\V_S = F_S V F_S^*$.

In the following, let $M_e$ denote the bounded operator
\begin{equation}\label{defMe}
  M_e = V^{1/2} \left(\frac 1{T + e}  - f(e) F^*_S F_S\right) |V|^{1/2}\,.
\end{equation}

\begin{proposition}\label{prop1}
  Assume that $1 + \lambda M_e$ is
  invertible. Then $H_\lambda$ has an eigenvalue $-e < 0 $ if and only
  if the selfadjoint operator
  \begin{equation}\label{volleoperatorvonSnachS}
    F_S|V|^{1/2} \frac {\lambda f(e)}{ 1 + \lambda  M_e } V^{1/2}F^*_S\, : \, L^2(S) \to L^2(S) 
  \end{equation}
  has an eigenvalue $-1$.  Furthermore, if $u
  \in L^2(S)$ is an eigenvector of \eqref{volleoperatorvonSnachS} with
  eigenvalue $-1$, then
  \begin{equation}\label{beziehungeigenvectorhlzuoperatoraufS}
    \psi_\lambda = \frac 1{T+e} |V|^{1/2} \frac 1{1 + \lambda M_e
    }V^{1/2} F^*_S u
  \end{equation}
  is an eigenvector of $H_\lambda$ in $L^2(\R^n)$ with eigenvalue
  $-e<0$.
\end{proposition}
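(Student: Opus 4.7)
The approach is to combine the Birman--Schwinger principle recalled just above the proposition with the additive decomposition
\[
V^{1/2}\frac{1}{T+e}|V|^{1/2} = M_e + f(e)\,V^{1/2} F_S^* F_S |V|^{1/2},
\]
which is built into the very definition \eqref{defMe}: $M_e$ absorbs the regular part of the resolvent, while the term $V^{1/2}F_S^* F_S|V|^{1/2}$ carries the singular behavior near $S$. The hypothesis that $1+\lambda M_e$ is invertible then lets one solve away the regular part and reduce the eigenvalue problem to one on $L^2(S)$.

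Concretely, suppose $H_\lambda\psi_\lambda = -e\psi_\lambda$ with $\psi_\lambda\in L^2(\R^n)\setminus\{0\}$, and set $\phi_\lambda = V^{1/2}\psi_\lambda$. The Birman--Schwinger equation \eqref{thm1birmschwinger} combined with the splitting rearranges to
\[
(1+\lambda M_e)\phi_\lambda = -\lambda f(e)\,V^{1/2} F_S^* F_S |V|^{1/2}\phi_\lambda.
\]
Inverting $1+\lambda M_e$, then applying $F_S|V|^{1/2}$ to both sides and writing $u := F_S|V|^{1/2}\phi_\lambda\in L^2(S)$, I obtain
\[
u = -\,F_S|V|^{1/2}\,\frac{\lambda f(e)}{1+\lambda M_e}\,V^{1/2} F_S^*\,u,
\]
which says precisely that \eqref{volleoperatorvonSnachS} has eigenvalue $-1$. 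Non-triviality of $u$ is forced by the invertibility assumption: if $u=0$ then the displayed rearrangement gives $(1+\lambda M_e)\phi_\lambda = 0$, so $\phi_\lambda = 0$, and then $\psi_\lambda = (T+e)^{-1}|V|^{1/2}\phi_\lambda = 0$, a contradiction.

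For the converse, given an eigenvector $u\in L^2(S)\setminus\{0\}$ of \eqref{volleoperatorvonSnachS} with eigenvalue $-1$, I define
\[
\phi_\lambda := -\lambda f(e)\,(1+\lambda M_e)^{-1} V^{1/2} F_S^*\,u,
\]
which lies in $L^2(\R^n)$ because $V^{1/2}F_S^*$ is bounded (noted earlier) and $(1+\lambda M_e)^{-1}$ is bounded. Applying $F_S|V|^{1/2}$ and using the hypothesis on $u$ gives $F_S|V|^{1/2}\phi_\lambda = u$, so substituting back recovers $(1+\lambda M_e)\phi_\lambda = -\lambda f(e)V^{1/2}F_S^* F_S|V|^{1/2}\phi_\lambda$, which is the Birman--Schwinger equation. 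Setting $\psi_\lambda := (T+e)^{-1}|V|^{1/2}\phi_\lambda$, which agrees with \eqref{beziehungeigenvectorhlzuoperatoraufS} up to the nonzero scalar $-\lambda f(e)$ that can be absorbed into the normalization of $u$, then yields the claimed $L^2$ eigenvector of $H_\lambda$; it is nonzero since $F_S|V|^{1/2}\phi_\lambda = u\neq 0$ forces $|V|^{1/2}\phi_\lambda\neq 0$, and $(T+e)^{-1}$ is injective. The only piece of genuine (but short) work I foresee is verifying selfadjointness of \eqref{volleoperatorvonSnachS}: writing $M_e = \sgn(V)\,A$ with $A := |V|^{1/2}[(T+e)^{-1}-f(e)F_S^*F_S]|V|^{1/2}$ selfadjoint, one uses the standard intertwining identity $(1+\lambda\sgn(V)A)^{-1}\sgn(V) = \sgn(V)(1+\lambda A\sgn(V))^{-1}$ to rewrite \eqref{volleoperatorvonSnachS} in a manifestly symmetric form. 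Beyond this, no serious obstacle is anticipated: the proposition is a bookkeeping reformulation of Birman--Schwinger once the singular part of the resolvent is peeled off.
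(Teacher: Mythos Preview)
Your proposal is correct and follows essentially the same route as the paper: the Birman--Schwinger principle combined with the splitting $V^{1/2}(T+e)^{-1}|V|^{1/2}=M_e+f(e)V^{1/2}F_S^*F_S|V|^{1/2}$, followed by inverting $1+\lambda M_e$ and passing from $L^2(\R^n)$ to $L^2(S)$ via the isospectrality of $AB$ and $BA$. The paper packages this last step as the factorization \eqref{equ:2thm1} and an appeal to isospectrality, whereas you carry it out explicitly by tracking eigenvectors; you also spell out the non-triviality checks and the selfadjointness argument via the intertwining identity, which the paper leaves implicit.
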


\begin{proof}
  According to the Birman-Schwinger principle discussed above,
  $H_\lambda $ having an eigenvalue $-e<0 $ is equivalent to the fact
  that $\lambda V^{1/2}\frac 1{T+ e} |V|^{1/2} + 1$ has a zero
   eigenvalue.  Using the definition of $M_e$ in
  \eqref{defMe} this implies that
  \begin{multline}\label{equ:2thm1}
    \lambda V^{1/2}\frac 1{T+ e} |V|^{1/2} + 1=\lambda f(e) V^{1/2}
    F_S^* F_S |V|^{1/2} + \lambda M_e + 1 \\ = (1 + \lambda M_e)
    \left(\frac { \lambda f(e)}{1 + \lambda M_e }  V^{1/2} F^*_S F_S
      |V|^{1/2} + 1\right)
  \end{multline}
  has an eigenvalue $0$. Under the assumption that $1 + \lambda M_e$
  is invertible we conclude that
  \begin{equation}\label{po}
    \frac { \lambda f(e)}{1 + \lambda  M_e }
    V^{1/2} F^*_S F_S |V|^{1/2}
  \end{equation}
  must have $-1$ as an eigenvalue. The fact that (\ref{po}) is
  isospectral to \eqref{volleoperatorvonSnachS}, together with the
  observation that all the arguments work in either direction, implies
  the first part of the theorem.  The second part of the theorem is an
  easy consequence of \eqref{eigenvbirmanschwinger}.
\end{proof}

In order to apply Proposition \ref{prop1} we need a bound on the
operator $M_e$ in (\ref{defMe}). The bound we derive will be expressed
in terms of the function
\begin{equation}
  g(e) =  \left\{ \begin{aligned}
      & 1                 \,\, &{\rm if}& \,\,\, 1 \leq r < 2\\
      & 1 + \ln [1+ 1/e]   \,\, &{\rm if}& \,\,\, r=2 \\
      & 1 + e^{2-r}       \,\, &{\rm if}& \,\,\, r>2\,.
    \end{aligned}
  \right.
\end{equation}
The following lemma is the basis for our analysis.

\begin{Lemma}\label{lem1}
  Let
  \begin{equation}\label{defcala}
    \mathcal{A}(V) = \left\{
      \begin{aligned}
        & \|V\|_{n/s}         \,\, &{\rm if}& \,\,\, n > s\\
        & \|V\|_{1+\eps}  \,\, &{\rm if}& \,\,\, n = s\\
        & \|V\|_1 \,\, &{\rm if}& \,\,\, n < s\,.
      \end{aligned}\right.
  \end{equation}
  Then
  \begin{equation}\label{menonr}
    \| M_e \| \leq \const \left( g(e)\left[\|V\|_{1}+
        \left(\iint dxdy |V(x)||x - y|^\kappa |V(y)| \right)^{1/2}
      \right]+\mathcal{A}(V)\right)
  \end{equation}
  with $\kappa=2$.  If $T(p)$ is radial, then (\ref{menonr}) holds
  with $\kappa=0$ for $n\geq 3$ and $\kappa=1$ for $n=2$.
\end{Lemma}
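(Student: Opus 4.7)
The plan is to control $\|M_e\|$ through the Hilbert-Schmidt bound
$$\|M_e\|_{\rm HS}^2 = \iint |V(x)|\,|N_e(x-y)|^2\,|V(y)|\,dx\,dy,$$
where $N_e(z)$ is the convolution kernel whose Fourier symbol is $(T(p)+e)^{-1} - f(e)\,\delta_S(p)/|\nabla P(p)|$. It suffices to show that $|N_e(z)|$ is bounded pointwise by $C g(e)(1+|z|^{\kappa/2})$ on the piece coming from a neighbourhood of $S$, plus a separate exterior contribution absorbed into the $\mathcal{A}(V)$ term.

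First I would split the momentum integral defining $(T+e)^{-1}$ at $\partial\Omega$. Outside $\Omega$, assumption (iii) gives $T(p)\geq C_1|p|^s + C_2$, so $(T+e)^{-1}\chi_{\Omega^c}$ is a Fourier multiplier whose kernel lies, uniformly in $e$, in the $L^p$ space dual to the Lebesgue exponent appearing in $\mathcal{A}(V)$; sandwiching with $V^{1/2}$ and $|V|^{1/2}$ and applying Young's or Hardy-Littlewood-Sobolev yields the $\mathcal{A}(V)$ contribution. The subtraction $f(e)F_S^*F_S$ is supported on $S\subset\Omega$, so it stays attached to the interior piece. On $\Omega$ I would use tubular coordinates $p = q + t\nu(q)$ with $q\in S$ and $\nu$ the unit normal, a diffeomorphism for small $t$ by (ii); a Taylor expansion gives $P(p)=t|\nabla P(q)|+O(t^2)$, and the substitution $\tau = t|\nabla P(q)|$ factors the interior integral into an integral over $S$ against a transverse $\tau$-integral. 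At the leading order---phase $e^{iq\cdot z}$, unit Jacobian, linear truncation of $P$---the transverse integral is $\int (|\tau|^r+e)^{-1}d\tau = f(e)$, which cancels the subtracted term exactly.

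The remainder splits into three parts: (a) the oscillatory defect $e^{i\tau(z\cdot\nu(q))/|\nabla P(q)|}-1$, bounded pointwise by $\min(2,C|\tau||z|)$; (b) corrections from the Jacobian and from the $O(t^2)$ term in the expansion of $P$, each carrying extra powers of $|\tau|$; and (c) the tail $|\tau|>\sigma|\nabla P(q)|$, uniformly bounded for $r>1$. Each reduces to a one-dimensional moment integral $\int |\tau|^\gamma/(|\tau|^r+e)\,d\tau$ with $\gamma\in\{1,2\}$, directly computable: bounded for $r<2$, logarithmic in $1/e$ at $r=2$, and growing like $g(e)$ for $r>2$. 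This produces the pointwise kernel bound with $\kappa=2$. When $T$ is radial, $S$ is a union of spheres and the $S$-integral $\int_S e^{iq\cdot z}h(q)\,dq$ acquires Bessel-type decay of order $|z|^{-(n-1)/2}$, absorbing one power of $|z|$ in dimension $n=2$ (so $\kappa=1$) and two for $n\geq 3$ (so $\kappa=0$). The main technical obstacle is assembling the remainders uniformly in $r$ while preserving the sharp $g(e)$-dependence, since the three regimes $r<2$, $r=2$, $r>2$ require slightly different treatment of the transverse moment integrals; the radial gain must be isolated by first averaging the phase against the spherical measure on $S$ before the transverse integration.
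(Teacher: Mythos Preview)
Your strategy is correct and runs closely parallel to the paper's, but the execution differs in one structural way that is worth recording. The paper never passes to the convolution kernel $N_e$ or to a Hilbert--Schmidt norm; it works entirely with the quadratic form
\[
(\psi,\tilde M_e\psi)=\int_{\R^n}\frac{|\hat\varphi(p)|^2}{T(p)+e}\,dp-f(e)\int_S\frac{|\hat\varphi(p)|^2}{|\nabla P(p)|}\,dp,\qquad \varphi=|V|^{1/2}\psi,
\]
splits the first integral into $\Omega_\tau$ and $\Omega_\tau^c$, and on $\Omega_\tau$ applies the co-area formula to write it as $\int_0^\tau(t^r+e)^{-1}\bigl(\int_{S_t}|\hat\varphi|^2/|\nabla P|\bigr)\,dt$. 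The comparison of $\int_{S_t}$ with $2\int_S$ is then done by a Gauss--divergence lemma on the exact level sets, so no Taylor expansion of $P$ is ever needed; the error is $O(t)$ times a bound on $\nabla\bigl(|\hat\varphi|^2/|\nabla P|\bigr)$, and it is the estimate $|\nabla|\hat\varphi(p)|^2|\le(2\pi)^{-n}\|\psi\|_2^2\bigl(\iint|V(x)||x-y|^2|V(y)|\bigr)^{1/2}$ that produces the $\kappa=2$ term. Your tubular-coordinate plus Taylor-remainder route reaches the same place but carries the extra burden of controlling $|P(q+t\nu)|^r-|t\,|\nabla P(q)||^r$ inside the resolvent, which is doable but less clean.

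For the radial improvement your sketch is a little too quick. The Bessel decay of $\int_S e^{iq\cdot z}h(q)\,dq$ does not directly apply once the integrand is contaminated by the $q$-dependent factor $e^{it z\cdot\nu(q)}-1$; you have to either exploit $\nu(q)=q/|q|$ to rewrite the phase as $e^{i(1+t/|q|)z\cdot q}$ and difference two Bessel expressions, or imitate the paper's device of bounding only the \emph{spherical average} of $\nabla|\hat\varphi|^2$ (equivalently, of $z\,e^{iz\cdot p}$), which is where the gain actually lives. The paper does the latter, and that is what makes the reduction to $\kappa=1$ for $n=2$ and $\kappa=0$ for $n\ge 3$ transparent.
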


Let us postpone the proof of this lemma until the end of the
section. The lemma says, in particular, that when $r<2$ the family of
operators $M_e$ is uniformly bounded. The limit of $M_e$ as $e\to 0$
actually exist in the operator norm topology. This is the content of
the next lemma, whose proof will also be given at the end this
section.

\begin{Lemma}\label{lem2}
  Assume that $r<2$. Then the limit
  \begin{equation}
    M_0 = \lim_{e\to 0} M_e 
  \end{equation}
  exists in the operator norm topology.
\end{Lemma}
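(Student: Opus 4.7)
The plan is to show $M_e$ is Cauchy in the operator norm as $e\to 0$, following the same symbol decomposition that must underlie Lemma~\ref{lem1} but applied to differences $M_e-M_{e'}$. Fix a cutoff $\chi\in C_c^\infty(\Omega)$ with $\chi\equiv 1$ in a smaller neighborhood of $S$, and split $M_e = M_e^{\mathrm{in}} + M_e^{\mathrm{out}}$, where $M_e^{\mathrm{in}}$ corresponds to the symbol $\chi(p)/(T(p)+e) - f(e) F_S^*F_S$ and $M_e^{\mathrm{out}}$ to $(1-\chi(p))/(T(p)+e)$. On $\mathrm{supp}(1-\chi)$ one has $T(p)\ge c>0$ by (ii) and (iii), so the outer symbol is dominated uniformly in $e$ and converges pointwise; under the $L^p$ hypotheses on $V$, the Hardy--Littlewood--Sobolev inequality invoked at the start of Section~3 gives continuity of $\phi\mapsto V^{1/2}\phi(i\nabla)|V|^{1/2}$ in operator norm, so dominated convergence yields norm convergence of $M_e^{\mathrm{out}}$.

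For the inner part, introduce tubular coordinates $(t,\omega)\in(-\delta,\delta)\times S$ via $P(p(t,\omega))=t$, valid by (ii), with Jacobian $|\nabla P|^{-1}J(t,\omega)$ and $J(0,\omega)=1$. The convolution kernel of $M_e^{\mathrm{in}}$ at $z=x-y$ then reads
\begin{equation*}
K_e^{\mathrm{in}}(z) = G(0,z)\Bigl(\int_{-\delta}^{\delta}\frac{dt}{|t|^r+e}-f(e)\Bigr) + \int_{-\delta}^{\delta}\frac{G(t,z)-G(0,z)}{|t|^r+e}\,dt,
\end{equation*}
where $G(t,z)=\int_S \chi(p(t,\omega))\,e^{iz\cdot p(t,\omega)} J(t,\omega)(2\pi)^{-n}|\nabla P(p(t,\omega))|^{-1}\,d\omega$, and $G(0,z)$ is exactly the integral kernel of $F_S^*F_S$. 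A direct evaluation using the substitution $t=e^{1/r}u$ gives $\int_{-\delta}^{\delta}(|t|^r+e)^{-1}dt=f(e)+O(1)$, so the first term is $O(1)$ uniformly in $e$. For the residue, $|G(t,z)-G(0,z)|\le C|t|(1+|z|)$ by $C^1$-regularity in $t$, and $r<2$ makes $\int_0^{\delta}t\,(t^r+e)^{-1}\,dt$ uniformly bounded, giving $|K_e^{\mathrm{in}}(z)|\le C(1+|z|)$ uniformly in $e$. In the radial cases an angular cancellation improves the growth to $|z|^{\kappa/2}$, matching the different $\kappa$ in the hypothesis.

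Applied to the difference, dominated convergence in the $t$-integral (the dominating function $C|t|^{1-r}(1+|z|)$ is integrable for $r<2$) yields pointwise convergence $K_e^{\mathrm{in}}(z)\to K_0^{\mathrm{in}}(z)$ with the same uniform envelope. To convert this into operator norm convergence of $V^{1/2}(K_e-K_{e'})(x-y)|V|^{1/2}$, I would use a Hilbert--Schmidt-type bound controlled by the moment integral $\iint|V(x)||x-y|^\kappa|V(y)|\,dx\,dy$, which is finite by the assumption in Theorem~\ref{Thm1gen}, and conclude via dominated convergence. The main obstacle is precisely this last step: one must verify that the residue kernel's at-most-linear growth in $|z|$ is absorbed by the weighted $V$-integral, and track that the $r<2$ condition is used throughout---otherwise $\int_0^{\delta}t\,(t^r+e)^{-1}\,dt$ diverges as $e\to 0$, which is exactly why the lemma genuinely fails for $r\ge 2$.
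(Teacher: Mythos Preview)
Your approach is essentially the paper's, recast from quadratic forms into convolution kernels. The paper writes down an explicit candidate limit $\tilde M_0$ via the co-area decomposition over level sets $S_t$ (your tubular coordinates) and then bounds $(\psi,(\tilde M_e-\tilde M_0)\psi)$ term by term, recycling the estimates already proved for Lemma~\ref{lem1}; your residue bound $|G(t,z)-G(0,z)|\le C|t|(1+|z|)$ is the kernel analogue of the paper's level-set bound~(\ref{comb1}), and both arguments hinge on $\int_0^\tau t\,t^{-r}\,dt<\infty$ precisely when $r<2$. Your Hilbert--Schmidt route for the inner part is fine and matches the paper's use of the $\kappa$-moment hypothesis.

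The one imprecise step is the outer part: pointwise convergence of a symbol together with a dominating bound does \emph{not} by itself give operator-norm convergence of $V^{1/2}\phi(i\nabla)|V|^{1/2}$; ``dominated convergence yields norm convergence'' is not a theorem here. You need either the elementary pointwise inequality
\[
\left|\frac{1-\chi(p)}{T(p)+e}-\frac{1-\chi(p)}{T(p)}\right|\le \frac{e}{c}\cdot\frac{1-\chi(p)}{T(p)},
\]
so the difference is bounded in norm by $e/c$ times a fixed bounded operator, or convergence of the symbol in the relevant $L^q(\R^n)$ norm combined with a Kato--Seiler--Simon bound. Either fix is immediate, so this is a wording issue rather than a genuine gap.
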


An explicit expression of $M_0$ will be given in the proof of
Lemma~\ref{lem2}.  We note that the operator $\W_S$ in (\ref{defws})
equals $\W_S = F_S |V|^{1/2} M_0 V^{1/2} F_S^*$.

We have now all tools in hand to prove our main theorems.

\begin{proof}[Proof of Theorem \ref{Thm1gen}]
  By assumption, the operator $\V_S$ has negative eigenvalues $a_S^i$
  with corresponding eigenfunctions $u_i \in L^2(S)$. 
  We shall show that for every $a^i_S<0$ and $\lambda$ small enough
  there exists a function $e_i(\lambda)>0$, with $\lim_{\lambda\to 0}
  \lambda f(e_i(\lambda)) = -1/a^i_S$, such that the selfadjoint
  operator \eqref{volleoperatorvonSnachS} has an eigenvalue $-1$ for
  $e = e_i(\lambda)$. Because of Proposition~\ref{prop1} this implies
  $(i)$.

  For this purpose consider the selfadjoint operator
  \begin{equation}\label{Gl}
    G(\lambda,e)= |V|^{1/2}\left( \frac 1{ 1+\lambda M_e} - 1\right)V^{1/2}\,.
  \end{equation}
  In terms of $G(\lambda,e)$, the
  operator~(\ref{volleoperatorvonSnachS}) can be expressed as
  \begin{equation}\label{thmG}
    \lambda f(e) (\V_S + F_S G(\lambda,e)F^*_S)\,.
  \end{equation}
  Let us first consider first the case $r < 2$, where
  $g(e)=1$. According to Lemma~\ref{lem1}, $M_e$ is uniformly bounded
  and hence $1+\lambda M_e$ is invertible for small
  $\lambda$. Therefore,
  \begin{equation}\label{gb}
    \|F_SG(\lambda,e)F^*_S\| \leq \const \|V\|_{1} \frac{\lambda
      \|M_e\|}{1 - \lambda \|M_e\|}\,,
  \end{equation}
  where we used that $\|F_S|V|F_S^*\| \leq \const \|V\|_{1}$.

  Simple first order perturbation theory implies that for small
  $\lambda$, the operator (\ref{thmG}) has negative eigenvalues
  $\lambda f(e)( a^i_S + O(\lambda))$. Moreover, the $O(\lambda)$ term
  depends continuously on $e$. Thus, for every $a^i_S<0$ and
  $\lambda>0$ small enough, there exists an $e_i(\lambda)$ such that
  $\lambda f(e_i(\lambda))( a^i_S + O(\lambda))= -1$. This implies the
  statement.

  A similar argument can be applied in the case $r\geq 2$. Although
  $M_e$ is not uniformly bounded in this case, we see that for values
  of $\lambda$ and $e$ such that $\lambda f(e)$ is bounded, $\lambda
  g(e)$ goes to zero as $\lambda$ and $e$ go to zero. Because of
  Lemma~\ref{lem1} this implies that $\lambda \|M_e\|\to 0$ as
  $\lambda\to 0$ for such $e$.  Hence we can again find a function
  $e_i(\lambda)$, with $\lim_{\lambda\to 0} \lambda f(e_i(\lambda)) =
  -1/a^i_S$, such that (\ref{thmG}) has an eigenvalue $-1$ and for
  $e=e_i(\lambda)$. This concludes the proof of (i) in the general
  case.

  In order to prove (ii) we shall again apply simple perturbation
  theory, which implies that for $e=e_i(\lambda)$ the eigenvector
  $u^i_\lambda \in L^2 (S)$ of (\ref{thmG}) corresponding to the
  eigenvalue $-1$ satisfies
  \begin{equation}\label{ulambda}
    u^i_\lambda = u_i + \eta_\lambda, \qquad \lim_{\lambda \to 0} \| \eta_\lambda \|_{2}
    = 0\,,
  \end{equation}
  with $u_i$ in the eigenspace of $\V_S$ corresponding to the
  eigenvalue $a_S^i$. Applying the second part of
  Proposition~\ref{prop1}, the eigenvector of $H_\lambda$
  corresponding to the eigenvalue $-e_i(\lambda)$ equals
  \begin{equation}
    \psi_\lambda^i = \frac 1{T+e_i(\lambda)} |V|^{1/2} \frac 1{1+\lambda M_{e_i(\lambda)}} 
    V^{1/2} F_S^* \left( u_i + \eta_\lambda\right)\,.
  \end{equation}
  Using the eigenvalue equation for $\psi_\lambda$,
  $(T+e_i(\lambda))\psi_\lambda^i = -\lambda V \psi_\lambda^i$, this
  can be rewritten as
  \begin{equation}
    -\lambda V^{1/2} \psi_\lambda^i = \frac{1}{1+\lambda M_{e_i(\lambda)}} V^{1/2} 
    F_S^* \left( u_i + \eta_\lambda \right)\,.
  \end{equation}
  Now $\lambda \|M_{e_i(\lambda)}\|\to 0$ as $\lambda\to 0$, and $F_S
  |V| F_S^*$ is bounded. After appropriate normalization,
  $V^{1/2}\psi_\lambda^i$ therefore converges to $V^{1/2} F_S^* u_i$
  strongly in $L^2(\R^n)$, as claimed.

  A simple perturbation argument leads to (iii). In fact, any negative
  eigenvalue of (\ref{thmG}) which does not correspond to a negative
  eigenvalue of $\V_S$ for $\lambda=0$ can be at most as negative as
  $-\lambda f(e) \|F_S G(\lambda,e) F_S^*\|\geq - \const \lambda^2
  f(e)$ for some constant depending only on $V$. This can be easily
  seen using (\ref{gb}) and Lemma~\ref{lem1}. Hence $\lambda^2
  f(e)\geq \const$ for such eigenvalues.

  To see (iv) we use the operator inequality $G(\lambda, e) \geq -
  \const \lambda |V|$ for small $\lambda$, which follows easily from
  (\ref{Gl}) and Lemma~\ref{lem1}. The operator in (\ref{thmG}) is
  therefore bounded from below by
  \begin{equation}
    \lambda f(e) \left( \V_S - \const \lambda F_S|V| F_S^*\right) 
    = \lambda f(e) F_S\left(V - \const \lambda |V|\right)F^*_S\, ,
  \end{equation}
  which is non-negative for $\lambda$ small enough according to our
  assumption.
\end{proof}

\begin{proof}[Proof of Theorem \ref{thm2statement}]
  Since $r<2$ by assumption, Lemma \ref{lem2} implies that $M_e$
  converges to $M_0$ in operator norm. Since $V^{1/2} F_S^*$ is a 
  bounded operator, we conclude that also $F_S |V|^{1/2} M_e V^{1/2}
  F_S^*$ converges in operator norm to $F_S |V|^{1/2} M_0 V^{1/2}
  F_S^*$, which we shall denote by $\W_S$ as in (\ref{defws}).

  With $\B_S = \V_S - \lambda W_S$ as in (\ref{defB}), the operator
  \eqref{volleoperatorvonSnachS} can thus be rewritten as
  \begin{equation}\label{xxv}
    \lambda f(e)( \B_S + \lambda F_S|V|^{1/2} W(\lambda,e) V^{1/2} F^*_S)\,,
  \end{equation}
  where
  \begin{equation}
    W(\lambda,e) =   \frac{\lambda M_e^2}{1+\lambda M_e} - M_e + M_0 
  \end{equation}
  has the property that $\|W(\lambda,e)\| \to 0$ as $\lambda\to 0$ and
  $e\to 0$.  If $b_S^i(\lambda)$ is a negative eigenvalue of $\B_S$,
  with $\lim_{\lambda \to 0} b_S^i(\lambda) <0$, then a similar
  perturbation argument as in the proof of  Theorem~\ref{Thm1gen} implies
  that $ \B_S + \lambda F_SW(\lambda,e) F^*_S$ has an eigenvalue with
  the asymptotic behavior $ b_S^i(\lambda) + o(\lambda )+ \lambda o(1)$, the
  last term going to zero as $e\to 0$. Given such a $b_S^i(\lambda)$,
  we can thus find an $e_i(\lambda)$, going to zero as $\lambda\to 0$,
  such that (\ref{xxv}) has an eigenvalue $1$ for $e=e_i(\lambda)$. In the limit
  $\lambda\to 0$, we conclude that
  \begin{equation}
    \lambda f(e_i(\lambda))= - 1/(b_S^i(\lambda) + o(\lambda))\,.
  \end{equation}
  Using again Proposition \ref{prop1} we obtain (i).

  If $\V_S$ has $0$ as an eigenvalue, with corresponding eigenvector
  $u_0$, then by the definition (\ref{defB}) of $\B_S$ and the fact
  that $\V_S u_0 = F_SVF_S^*u_0 = 0$ we obtain that
  \begin{equation}
    (u_0, \B_S u_0) = -\lambda (u_0, \W_S u_0) = -
    \lambda \lim_{e \to 0} (u_0,F_S V \frac 1{T+e} V F^*_S u_0) \,.
  \end{equation}
  The latter quantity is strictly negative, as can be seen by an
  analyticity argument similar to the proof of Corollary~\ref{Cor}.
  In particular, if the kernel of $\V_S$ is not empty then there is at
  least one corresponding negative eigenvalue of $\B_S$ for small
  enough $\lambda$ and $e$.  Together with Proposition~\ref{prop1}
  this implies the existence of a corresponding negative eigenvalue of
  $H_\lambda$.
\end{proof}

We are left with proving Lemmas~\ref{lem1} and~\ref{lem2}.

\begin{proof}[Proof of Lemma~\ref{lem1}]
  We note that $\tilde M_e = \sgn(V) M_e$ is selfadjoint, and $\|M_e\|
  =\|\tilde M_e\|$. For $\psi \in L^2(\R^n)$, let $\varphi = |V|^{1/2}
  \psi$. By the definition of $M_e$ in (\ref{defMe}), we have
  \begin{equation}\label{menew}
    (\psi, \tilde M_e \psi)  = \int_{\R^n} \frac  {|\hat \varphi(p)|^2} {T(p) + e}dp  
    - {f(e)}\int_S
    \frac{|\hat \varphi(p)|^2}{|\nabla P(p)|} dp \,.
  \end{equation}
  By our assumptions on $T$, there exists a $\tau>0$ such that
  \begin{equation}
    \Omega_\tau = \{ p \in \R^3\, |\, |P(p)| < \tau \}
  \end{equation}
  is a subset of $\Omega$. Recall that $P$ is assumed to be twice
  differentiable on $\Omega$, and hence also on $\Omega_\tau$. If $S$
  is not connected, we choose $\tau$ small enough such that
  $\Omega_\tau$ has the same number of connected components as $S$. On
  $\Omega_\tau$, we will use the co-area formula to split the volume
  integral in the first term on the right side of (\ref{menew}) into
  integrals over the level sets
  \begin{equation}
    S_t =\{p \in \Omega_\tau \,|\, |P(p)| = T^{1/r}(p) = t\}
  \end{equation}
  for $0\leq t\leq \tau$. Note that $S_0=S$. In fact, using the co-area formula we have
  \begin{equation}\label{obound}
    \int_{\Omega_\tau}  \frac  {|\hat \varphi(p)|^2}  {T(p) + e}dp = \int_0^{\tau}  dt
    \frac 1{t^{r} + e} \int_{S_t} \frac{|\hat \varphi (p)|^2}{|\nabla P(p)|}dp\,,
  \end{equation}
  where $dp$ in the latter integral denotes the Lebesgue measure on
  $S_t$.

  Recall that $T(p)=|P(p)|^r=0$ on $S$, and $|\nabla P|\neq 0$ on
  $\Omega_\tau$.  Hence every connected component of  $S_t$ consists of two disjoint surfaces, 
one lying outside $S$ and one lying inside $S$. In
  order to bound \eqref{obound} we make use of the following lemma.

  \begin{Lemma}\label{Lemgauss}
    Let $h : \Omega_\tau \to \R$, with $h \in C^1(\Omega_\tau)$, and
    let $0<t<\tau$. Then
    \begin{equation}
      \left | \int_{S_t} h (p) dp  - 2 \int_S h(p) dp \right| \leq   \int_0^t d\sigma 
\int_{S_\sigma} \frac 1{|\nabla P(p)|} \left| \nabla \cdot 
        \left( h(p) \frac{\nabla P(p)}{|\nabla P(p)|}\right) \right| dp\,.
    \end{equation}
  \end{Lemma}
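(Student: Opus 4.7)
The plan is to apply the divergence theorem to the vector field $X(p) = h(p)\,\nabla P(p)/|\nabla P(p)|$, which lies in $C^1(\Omega_\tau)$ by our hypotheses $h\in C^1(\Omega_\tau)$, $P\in C^2(\Omega)$, and $|\nabla P|\neq 0$ on $\Omega$. Once an exact identity for $\int_{S_t}h\,dp - 2\int_S h\,dp$ as a volume integral of $\nabla\cdot X$ is obtained, the co-area formula converts it to the stated iterated-integral bound.

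For fixed $0 < t < \tau$, I would work on a single connected component of $\Omega_\tau$ (the others being handled identically) and split it along $S$ into the two open pieces
\[
A_+ = \{p\in\Omega_\tau : 0 < P(p) < t\}, \qquad A_- = \{p\in\Omega_\tau : -t < P(p) < 0\},
\]
whose boundaries are $\{P = t\}\cup S$ and $\{P = -t\}\cup S$, respectively. Since $P\in C^2$ with $\nabla P\neq 0$, each $\{P = \pm t\}$ is a $C^2$ hypersurface by the implicit function theorem, so the classical divergence theorem applies on $A_\pm$.

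The next step is the computation of outward unit normals. On $\{P = t\}$ regarded as part of $\partial A_+$ the outward normal is $+\nabla P/|\nabla P|$, while on $S$ as part of $\partial A_+$ it is $-\nabla P/|\nabla P|$; on $\partial A_-$ the roles of these signs are reversed. Feeding these into the divergence theorem and subtracting the $A_-$ identity from the $A_+$ identity makes the two $\int_S h$ contributions reinforce, yielding
\[
\int_{S_t} h\,dp - 2\int_S h\,dp = \int_{A_+}\nabla\cdot X\,dV - \int_{A_-}\nabla\cdot X\,dV,
\]
where I used that $S_t$ is the disjoint union of $\{P = t\}$ and $\{P = -t\}$. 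The triangle inequality then bounds the absolute value of the left side by $\int_{\{|P|<t\}\cap\Omega_\tau}|\nabla\cdot X|\,dV$.

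The final step is the co-area formula applied to the Lipschitz function $|P|$, using $|\nabla|P|| = |\nabla P|$ a.e.\ (indeed off the measure-zero set $S$), which rewrites the volume integral as $\int_0^t d\sigma\int_{S_\sigma}|\nabla\cdot X|/|\nabla P|\,dp$, matching the stated right-hand side. The only genuine subtlety of the argument is the sign bookkeeping for the outward normals: one must combine the two divergence-theorem identities so that the $S$-contributions add to give the coefficient $-2$ rather than cancel to leave only $\int_{S_t^+}h - \int_{S_t^-}h$. Once that bookkeeping is in place, the remainder is a routine application of the divergence theorem and the Lipschitz co-area formula.
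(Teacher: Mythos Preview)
Your proof is correct and follows essentially the same approach as the paper: both apply the divergence (Gauss) theorem to the vector field $h\,\nabla P/|\nabla P|$ on the two shell regions between $S$ and $S_t$, then invoke the co-area formula to convert the resulting volume integral into the iterated integral on the right-hand side. The only cosmetic difference is that you label the two regions by the sign of $P$ (your $A_\pm$), whereas the paper labels them geometrically as lying inside or outside of $S$ (its $\Omega_t^{i,o}$); the sign bookkeeping and conclusion are identical.
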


\begin{proof}
Without loss of generality we can assume that $S$ is connected. We shall write  $S_t=S^o_t
  \cup S^i_t$, with $S_t^{i,o}$ lying inside and outside $S$, respectively.
  Let $\Omega_t^{o,i} = \bigcup_{0\leq \sigma\leq t} S_t^{o,i}$ denote
  the union of the sets $S_\sigma^{o,i}$ for $0\leq \sigma\leq t$.
  By definition $\frac{\nabla P}{|\nabla P|}$ is a unit vector field
  which is orthogonal to the hypersurfaces $S^o_t$ and $S^i_t$ and
  points either inward or outward, depending on $P$. Depending on the
  direction, we have
  \begin{align}
    \int_{S^o_t} h(p) dp - \int_{S}h(p) dp =& \pm \int_{\partial
      \Omega^o_t} h(p)
    \frac{\nabla P(p)}{|\nabla P(p)|} \cdot d {\bf S} \\
    \int_{S^i_t} h(p) dp - \int_{S}h(p) dp =& \mp \int_{\partial
      \Omega^i_t} h(p) \frac{\nabla P(p)}{|\nabla P(p)|} \cdot d {\bf
      S} \,.
  \end{align}
  Using Gauss' theorem we infer, for $q = o,i$,
  \begin{multline}
    \int_{\partial \Omega^q_t} h(p) \frac{\nabla P(p)}{|\nabla P(p)|}
    \cdot d {\bf S} = \pm \int_{\Omega^q_t} \nabla \cdot \left(h(p)
      \frac{\nabla P(p)}{|\nabla P(p)|} \right)dp \\ = \pm \int_0^t
    d\tau \int_{S^q_\tau} \frac 1{|\nabla P(p)|} \nabla \cdot \left(
      h(p) \frac{\nabla P(p)}{|\nabla P(p)|}\right) dp\,,
  \end{multline}
  where the last equation follows again from the co-area formula. The
  rest is obvious.
\end{proof}

We shall now apply Lemma \ref{Lemgauss} to the function $h(p) =
|\hat\varphi(p)|^2 |\nabla P(p)|^{-1}$.  Note that
\begin{align}\nonumber
  |\hat\varphi(p)|^2 & = (2\pi)^{-n} \iint |V(x)|^{1/2} |V(y)|^{1/2}
  \psi(x)^* \psi(y) e^{ip\cdot (x-y)} dx dy \\ & \leq (2\pi)^{-n} \|V\|_1
  \|\psi\|_2^2 \label{bp}
\end{align}
uniformly in $p$ by Schwarz's inequality. Similarly
\begin{equation}\label{nabla}
  \left|\nabla|\hat\varphi(p)|^2\right| \leq (2\pi)^{-n}  \|\psi\|_2^2 
  \left( \iint |V(x)| |V(y)| |x-y|^2 dx dy \right)^{1/2} \,.
\end{equation}
By assumption, there are constants $c,C>0$ such that $|\nabla P| \geq
c$ and $|\partial_i \partial_j P| \leq C$ for $1\leq i,j\leq n$ on $\Omega_\tau$. Moreover, the measure of
the sets $S_t$ is uniformly bounded for $0\leq t\leq \tau$. We
conclude that
\begin{multline}\label{comb1}
  \left| \int_{S_t} \frac{ |\hat\varphi(p)|^2}{|\nabla P(p)|} dp - 2
    \int_S \frac {|\hat\varphi(p)|^2}{|\nabla P(p)|} dp \right|\\ \leq
  \const t \|\psi\|_2^2 \left[ \|V\|_1 +\left( \iint |V(x)| |V(y)|
      |x-y|^2 dx dy \right)^{1/2} \right] \,.
\end{multline}
By combining (\ref{comb1}) with (\ref{obound}) and (\ref{menew}), we
obtain the bound
\begin{align}\nonumber
  \left| (\psi, \tilde M_e \psi) \right| & \leq \const \int_0^\tau
  \frac{t\, dt }{t^r + e} \|\psi\|_2^2 \left[ \|V\|_1 +\left( \iint |V(x)|
      |V(y)| |x-y|^2 dx dy \right)^{1/2} \right] \\ \label{combi} &
  \quad + \left| \int_0^\tau \frac{2\, dt }{t^r + e} - f(e) \right| \int_S
  \frac{ |\hat\varphi(p)|^2}{|\nabla P(p)|} dp + \int_{\Omega_\tau^c}
  \frac{|\hat\varphi(p)|^2} { T(p) + e} dp \,.
\end{align}
It is easy to see that $\int_0^\tau t(t^2+e)^{-1} dt \leq \const g(e)$ for any
fixed $\tau$. Similarly, $|f(e) - 2\int_0^\tau (t^r+e)^{-1} dt |\leq
\const g(e)$. The integral  in the second term in (\ref{combi}) is bounded
by $\|\psi\|_2^2 \|V\|_1$ using (\ref{bp}).  Moreover, since $T(p)
\geq \const (1+|p|^s)$ on $\Omega_\tau^c$, the last term in
(\ref{combi}) can bounded with the aid of the Hardy-Littlewood-Sobolev
inequality \cite[Theorem.~4.3]{LL} and H\"older's inequality as
\begin{equation}
  \int_{\Omega_\tau^c} \frac{|\hat\varphi(p)|^2} { T(p) + e} dp \leq
  \const \|\psi\|_2^2 \mathcal{A}(V)
\end{equation}
with $\mathcal{A}(V)$ defined in (\ref{defcala}).  This proves
(\ref{menonr}) with $\kappa=2$.

In the case when $T$ is radial, the surfaces $S_t$ are $n-1$
dimensional spheres.  In this case, we can obtain a better bound on
$\|M_e\|$ in the following way. It is not necessary to obtain a
pointwise bound on $\nabla |\hat\varphi(p)|^2$ but only on its
spherical average.  Using the fact that for $n\geq 2$ 
\begin{align}\nonumber
  \frac 1{|\Ss^{n-1}|} \int_{\Ss^{n-1}} e^{i k\cdot \omega} d\omega &
  = \frac{|\Ss^{n-2}|}{|\Ss^{n-1}|} \int_0^\pi e^{i |k| \cos\theta}
  (\sin \theta)^{n-2} d\theta \\ & = \pi
  \frac{\Gamma((n-1)/2)^2}{\Gamma(n/2)} \left(\frac
    2{|k|}\right)^{(n-2)/2} J_{(n-2)/2} (|k|) \,,
\end{align}
where $J_{(n-2)/2}$ is a Bessel function, as well as
the bounds $J_{(n-2)/2}(|k|)\leq 1$, $J_{(n-2)/2}(|k|) \leq
(|k|/2)^{(n-2)/2}\Gamma((n-1)/2)$ and the asymptotics 
$J_{(n-2)/2} (|k|) \sim |k|^{-1/2}$ for $|k|\to \infty$ \cite{abra},
it is easy to see that
\begin{equation}
  \left|  \int_{\Ss^{n-1}} \nabla |\hat\varphi(|p|\omega)|^2 d\omega \right| 
  \leq \const \|\psi\|_2^2  
  \left( \iint |V(x)| |V(y)| |x-y|^\kappa dx dy\right)^{1/2} 
\end{equation}
with $\kappa=0$ for $n\geq 3$ and $\kappa=1$ for $n=2$.  Using this
bound instead of (\ref{nabla}) and proceeding as above, we arrive at
(\ref{menonr}) with $\kappa$ as stated.
\end{proof}

\begin{proof}[Proof of Lemma~\ref{lem2}]
  Let $\tilde M_0$ be defined via the quadratic form
  \begin{align}\nonumber
    (\psi, \tilde M_0 \psi) & = \int_0^\tau \frac 1{t^r} \left(
      \int_{S_t} \frac{|\hat\varphi(p)|^2}{|\nabla P(p)|} dp - 2 \int_S
      \frac{|\hat \varphi(p)|^2}{|\nabla P(p)|} dp \right) +
    \int_{\Omega_\tau^c} \frac{|\hat\varphi(p)|^2}{T(p)} dp \\ & \quad
    + C_\tau \int_S \frac{|\hat \varphi(p)|^2}{|\nabla P(p)|} dp\,,
  \end{align}
  where $\varphi = |V|^{1/2} \psi$ and
  \begin{equation}
    C_\tau = \lim_{e\to 0} C_\tau(e) \,, \quad C_\tau(e) =  
\int_0^\tau \frac{2}{t^r + e} dt - f(e) \,,
  \end{equation}
  which is finite for $1\leq r <2$.  The notation is the same as in
  the proof of Lemma~\ref{lem1}. With $\tilde M_e = \sgn(V) M_e$ as
  before, we have
  \begin{align}\nonumber
    \left( \psi, \left( \tilde M_e - \tilde M_0\right) \psi\right) & =
    \int_0^\tau dt \left( \frac{1}{t^r+e} - \frac 1{t^r}\right) \left(
      \int_{S_t} \frac{|\hat\varphi(p)|^2}{|\nabla P(p)|} dp 
  -2  \int_{S} \frac{|\hat\varphi(p)|^2}{|\nabla P(p)|} dp\right)  \\
    \nonumber & \quad + \int_{\Omega_\tau^c} |\hat\varphi(p)|^2 \left(
      \frac{1}{T(p)+e}- \frac{1}{T(p)}\right) dp \\ & \quad + \left(
      C_\tau(e) - C_\tau\right) \int_S \frac{|\hat
      \varphi(p)|^2}{|\nabla P(p)|} dp \,.
  \end{align}
  From this representation and the various bounds derived in the proof
  of Lemma~\ref{lem1}, it is easy to see that the right side goes to
  zero as $e\to 0$, and the convergence is uniform in $\psi$ for fixed
  $\|\psi\|_2$. This implies that $\lim_{e\to 0} \|\tilde M_e - \tilde
  M_0\| =0$, and hence also $\lim_{e\to 0} \|M_e - M_0\| =0$ with
  $M_0= \sgn(V) \tilde M_0$.
\end{proof}

\bigskip \noindent {\it Acknowledgements.}  Part of this work was done during the
authors' visit at the Erwin Schr\"odinger Institute for Mathematical Physics in
Vienna, Austria, and the hospitality and support during this visit is gratefully
acknowledged. This work was partially supported by U.S. National Science
Foundation grants DMS-0800906 (C.H.) and PHY-0652356 (R.S.).


\end{document}